\date{}
\numberwithin{equation}{section}
\def\({\bigl(}
\def\){\bigr)}
\newtheorem{thm}{Theorem}[section]
\newtheorem{cor}{Corollary}[section]
\newtheorem{lemma}{Lemma}[section]
\def\abs#1{\lvert#1\rvert} 
\def\Abs#1{\bigl\lvert#1\bigr\rvert} 
\def\dfrac#1#2{\lower0.15ex\hbox{\large$\textstyle\frac{#1}{#2}$}}
\def\({\bigl(}
\def\){\bigr)}
\def\st{\mathrel{|}}
\let\eps=\varepsilon
\def\calZ{\mathcal{Z}}
\def\E{\operatorname{\mathbb{E}}}
\def\Diam{\operatorname{diam}}
\def\Reals{{\mathbb{R}}}
\def\Complexes{{\mathbb{C}}}
\def\Mix{\operatorname{Mix}}
\def\nicebreak{\vskip 0pt plus 50pt\penalty-300\vskip 0pt plus -50pt }
\begin{document}

\title{On a bound of Hoeffding in the complex case}

\author{Mikhail Isaev${}^*{}^\dag$~~and~~Brendan~D.~McKay\vrule width0pt height2ex\thanks
 {Research supported by the Australian Research Council.}\\
\small ${}^*$Research School of Computer Science\\[-0.9ex]
\small Australian National University\\[-0.9ex]
\small Canberra ACT 0200, Australia\\[0.3ex]
\small ${}^\dag$Moscow Institute of Physics and Technology\\[-0.9ex]
\small Dolgoprudny, 141700, Russia\\[-0.3ex]
\small\texttt{mikhail.isaev@anu.edu.au, bdm@cs.anu.edu.au}
}

\maketitle

\begin{abstract}
 It was proved by Hoeffding in 1963 that a real random variable
 $X$ confined to $[a,b]$ satisfies
 $\E e^{X-\E X} \le e^{(b-a)^2/8}$.  We generalise this to 
 complex random variables.
\end{abstract}

\section{Introduction}\label{intro}

A celebrated concentration inequality of Hoeffding relies on the
following bound.
\begin{lemma}[{\cite{Hoeffding}}]\label{Hoeff}
  Let $X$ be a real random variable such that $a\le X\le b$.  Then
  \[
      1\le \E e^{X-\E X} \le e^{(b-a)^2/8}.
  \]
\end{lemma}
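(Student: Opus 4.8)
The plan is to establish the two inequalities separately; only the upper bound has any content, and I may assume $a<b$, since $a=b$ forces $X$ to be constant and makes both bounds equalities. The lower bound $1\le\E e^{X-\E X}$ is immediate from Jensen's inequality applied to the convex function $t\mapsto e^{t}$: indeed $\E e^{X-\E X}\ge e^{\E(X-\E X)}=e^{0}=1$.

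For the upper bound the strategy is to linearise the exponential on $[a,b]$ and thereby reduce everything to a one-variable calculus estimate. For $x\in[a,b]$ write $x$ as the convex combination $x=\frac{b-x}{b-a}\,a+\frac{x-a}{b-a}\,b$; convexity of $t\mapsto e^{t}$ then gives
\[
  e^{x}\le\frac{b-x}{b-a}\,e^{a}+\frac{x-a}{b-a}\,e^{b}.
\]
Taking expectations, and putting $p=\frac{\E X-a}{b-a}\in[0,1]$ and $h=b-a>0$, we obtain $\E e^{X}\le(1-p)e^{a}+p\,e^{b}$; dividing both sides by $e^{\E X}=e^{a+ph}$ yields $\E e^{X-\E X}\le e^{\phi(h)}$, where
\[
  \phi(h)=-ph+\ln\bigl(1-p+p\,e^{h}\bigr).
\]

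It remains to show $\phi(h)\le h^{2}/8$ for every real $h$, with $p$ treated as a fixed parameter. A direct computation gives $\phi(0)=0$ and $\phi'(0)=0$, while $\phi''(h)=u(1-u)$ where $u=u(h)=\frac{p\,e^{h}}{1-p+p\,e^{h}}\in[0,1]$, so $\phi''(h)\le\frac14$ everywhere. Taylor's theorem with Lagrange remainder then gives $\phi(h)=\frac12\phi''(\xi)\,h^{2}\le\frac18 h^{2}$ for some $\xi$ between $0$ and $h$, which completes the proof.

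The main — essentially the only nonroutine — step is recognising that $\phi''$ collapses to the product $u(1-u)$ of a quantity in $[0,1]$ with its complement, which is exactly what produces the constant $\frac14$ and hence the exponent $(b-a)^2/8$; the remaining manipulations are pure bookkeeping. One should also keep in mind that generalising this argument to complex $X$ (the actual goal of the paper) will presumably require a substitute for the convexity step, since $t\mapsto e^{t}$ is not convex on a planar region, so the real proof here is more a template than a blueprint for what follows.
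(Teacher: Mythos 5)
Your proof is correct, and it is the classical argument behind this lemma: the paper itself offers no proof, citing Hoeffding directly, and your convex-combination step $e^{x}\le\frac{b-x}{b-a}e^{a}+\frac{x-a}{b-a}e^{b}$ followed by the estimate $\phi''(h)=u(1-u)\le\frac14$ is exactly the standard route to the constant $\frac18$. The only thing worth noting is that the same convexity step, if one optimises $e^{-ph}(1-p+pe^{h})$ exactly over the distribution instead of relaxing to the quadratic bound $h^{2}/8$, gives the sharper two-point extremal form $G(b-a)$ stated in Lemma~\ref{Hoeff3}, which is the version the paper actually leans on later; your quadratic relaxation proves Lemma~\ref{Hoeff} but not that refinement.
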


Given the hundreds of references to this inequality in the literature,
we believe that a similarly tight bound for complex random
variables may also have application.  Indeed, our ongoing investigation
of complex martingales related to multidimensional asymptotics encounters
such a need.  Our aim, therefore, is to find a complex analogue
of Lemma~\ref{Hoeff}.

There are several possible complex replacements for the real
bounds $a\le X\le b$.
A~natural choice would be to confine
$Z$ to a disk of given radius, but we will use the weaker condition
that the support of $Z$ has bounded diameter.
This measure of spread naturally arises in the
study of separately Lipschitz functions~\cite{Fan}, also called 
functions satisfying the
bounded difference condition~\cite{Boucheron}.
For a complex random variable $Z$, define the diameter of~$Z$ to be
\begin{align}
    \Diam Z &= \inf\, \bigl\{ c\in\Reals \st P(\abs{Z_1-Z_2} > c) = 0 \bigr\}, \label{diam1}\\
     &\kern3em\text{where $Z_1,Z_2$ are independent copies of~$Z$,} \notag
\end{align}
with the infimum of an empty set taken to be~$\infty$.

Since Hoeffding's bound states that 
$\E e^{X-\E X}$ is concentrated near~1, the natural complex
analogue is to bound the distance of $\E e^{Z-\E Z}$ from~1.
This is the nature of our main theorem.

\begin{thm}\label{main}
  Let $Z$ be a complex random variable with $\Diam Z\le d$.  Then
  \[ \abs{\E e^{Z-\E Z}-1} \le e^{d^2/8}-1. \]
\end{thm}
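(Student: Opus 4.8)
The plan is to make two routine reductions, observe a clean (but not quite sufficient) link to Lemma~\ref{Hoeff}, and then reduce the problem to a low‑dimensional extremal configuration, where the real work remains. Since $\E e^{Z-\E Z}$ and $\Diam Z$ are both unchanged under translating $Z$ by a constant, I may assume $\E Z=0$; then the support of $Z$ has diameter $\le d$ and contains $0$ in its convex hull, so every support point is within distance $d$ of $0$ and $\abs Z\le d$ almost surely. It remains to prove $\abs{\E e^{Z}-1}\le e^{d^2/8}-1$, and since $\abs{\E e^Z-1}=\sup_{\theta\in\Reals}\operatorname{Re}\!\bigl(e^{i\theta}(\E e^Z-1)\bigr)$, this amounts to bounding $\E\bigl[e^{\operatorname{Re}Z}\cos(\operatorname{Im}Z+\theta)\bigr]-\cos\theta$ by $e^{d^2/8}-1$ for every real $\theta$.

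There is a tempting link to Lemma~\ref{Hoeff}. For $\cos\theta>0$ the function $h(v)=e^{v\tan\theta}\cos(v+\theta)$ satisfies $h(0)=\cos\theta$ and $h'(v)=-(\cos\theta)^{-1}e^{v\tan\theta}\sin v$, hence $h(v)\le\cos\theta$ for $\abs v<\pi$, i.e.\ $\cos(v+\theta)\le\cos\theta\,e^{-v\tan\theta}$. Using $\abs{\operatorname{Im}Z}\le d$ (take $d<\pi$ for the moment) this gives, pointwise, $e^{\operatorname{Re}Z}\cos(\operatorname{Im}Z+\theta)\le\cos\theta\,e^{\operatorname{Re}((1+i\tan\theta)Z)}$, and applying Lemma~\ref{Hoeff} to the mean‑zero real variable $X_\theta=\operatorname{Re}((1+i\tan\theta)Z)$ yields $\E[e^{\operatorname{Re}Z}\cos(\operatorname{Im}Z+\theta)]\le\cos\theta\,e^{(\Diam X_\theta)^2/8}$ (the case $\cos\theta\le0$ is analogous). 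The difficulty is that $X_\theta=(\sec\theta)\operatorname{Re}(e^{i\theta}Z)$ only inherits $\Diam X_\theta\le d\sec\theta$, and $\cos\theta\,(e^{d^2\sec^2\theta/8}-1)$ exceeds $e^{d^2/8}-1$ once $\theta$ is bounded away from $0$ and $\pi$; the bound $d\sec\theta$ is sharp precisely when the support of $Z$ lies on a line in the direction $e^{-i\theta}$, i.e.\ when $Z$ is essentially a rotated real random variable.

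To get past this I would reduce to an extremal $Z$. The set of laws with barycenter $0$ and diameter $\le d$ is weak‑$*$ compact ($\Diam$ is weak‑$*$ lower semicontinuous, the barycenter is continuous), so $\abs{\E e^Z-1}$ attains a maximum; perturbing the atomic weights while preserving total mass and barycenter either strictly increases $\abs{\E e^Z-1}$ (contradicting extremality) or leaves it constant, which forces $e^{z_j}=L(z_j)$ at every atom $z_j$ for some $\Reals$‑affine $L\colon\Complexes\to\Complexes$; then $\E e^Z=L\bigl(\sum p_jz_j\bigr)=L(0)$ is unchanged if we use Carathéodory to re‑express $0$ as a convex combination of at most three of the $z_j$. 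So I may take $Z$ supported on $z_1,z_2,z_3$ with pairwise distances $\le d$ and weights $p_j$ equal to the barycentric coordinates of $0$. If the $z_j$ are collinear, rotating them onto $\Reals$ makes $Z$ a real random variable with $\E Z=0$, $\Diam Z\le d$, and Lemma~\ref{Hoeff} gives $1\le\E e^Z\le e^{d^2/8}$, as required. (The regime of large $d$ is in any case trivial: $\abs{\E e^Z-1}=\abs{\E(e^Z-1-Z)}\le\E(e^{\abs Z}-1-\abs Z)\le e^d-1-d\le e^{d^2/8}-1$ for $d$ large.)

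The main obstacle is the non‑collinear three‑atom case: showing that such a $Z$ cannot be extremal, or at least that $\abs{\E e^Z-1}\le e^{d^2/8}-1$ there. Since the weights are now pinned by the barycenter condition, I would perturb the positions (letting the weights readjust); feasibility leaves a positive‑dimensional family of admissible perturbations unless all three pairwise distances equal $d$ (the equilateral configuration), for which $\E e^Z-1=O(d^3)$ already lies far below $e^{d^2/8}-1$. Either a first‑order variational argument excluding the generic non‑collinear extremizer, together with direct estimates on the boundary configurations, or a direct bound on
\[
\E e^Z-1=\det\!\begin{pmatrix} e^{z_1}-1 & z_1 & \bar z_1\\ e^{z_2}-1 & z_2 & \bar z_2\\ e^{z_3}-1 & z_3 & \bar z_3\end{pmatrix}\bigg/\det\!\begin{pmatrix} 1 & z_1 & \bar z_1\\ 1 & z_2 & \bar z_2\\ 1 & z_3 & \bar z_3\end{pmatrix}
\]
(the error at $0$ of interpolating $e^z$ by an $\Reals$‑affine function through $z_1,z_2,z_3$), combined with Lemma~\ref{Hoeff} applied to suitably chosen real projections, should close the argument; carrying either route through is where the real difficulty lies.
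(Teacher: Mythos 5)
Your reduction to mean zero and to at most three support points is essentially the paper's Lemma~\ref{max}, but the two places where you stop are exactly where the substance lies, and one of the steps you do state is wrong. In the collinear case you claim that "rotating them onto $\Reals$ makes $Z$ a real random variable" to which Lemma~\ref{Hoeff} applies; but $\E e^{Z}$ is not invariant under rotating the support, so a mean-zero $Z=e^{i\theta}X$ with $X$ real does not reduce directly to Hoeffding's bound. The paper needs a separate argument here (Lemma~\ref{twopoints}): one checks that $X$ or $-X$ has all odd moments nonnegative, and then bounds $\abs{\E e^{Z}-1}=\bigl|\sum_{k\ge2}e^{ik\theta}\E X^k/k!\bigr|\le \E e^{X}-1$ term by term before invoking the real bound. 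Your "large $d$ is trivial" remark, $\abs{\E e^Z-1}\le e^{d}-1-d\le e^{d^2/8}-1$, is correct but only takes effect around $d\ge 8$, so it does not relieve the intermediate range.

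The bigger problem is the non-collinear three-point case, which you explicitly leave open and propose to close by a variational argument excluding such extremizers. That route is blocked for general $d$: the paper proves (Theorem~\ref{smalld}) that non-collinear three-point local maxima cannot occur only for $d\le d_0\approx 3.12$, and the Hessian computation there crucially uses $\Re\E e^Z>0$, which fails beyond $d_0$ (Lemma~\ref{real0}); the authors note explicitly that for larger $d$ local maxima at three-point supports can occur, so a first-order/second-order exclusion of the kind you sketch cannot carry the theorem for all $d$. The paper's actual mechanism for $d\ge 3$ is different and is the key missing idea in your proposal: write $\E e^Z-1=\E e^{3Z/d}-1+\int_{3/d}^{1}\E Z e^{sZ}\,ds$, bound the first term by the small-$d$ result, and bound $\abs{\E Ze^{sZ}}$ by combining (i) the fact that $Z$ lies in a disk of radius $d/\sqrt3$ (Lemma~\ref{diamprops}), (ii) Cauchy--Schwarz after centring $e^{Z}$ at its mean, and (iii) Hoeffding's real bound applied to $\E e^{\Re Z}$ and $\E e^{2\Re Z}$, together with the numerical inequalities of Lemma~\ref{technical}; averaging the two resulting estimates gives $\abs{\E Ze^{sZ}}\le\frac14 d^2 s\,e^{d^2s^2/8}$, and integrating yields exactly $e^{d^2/8}-1$. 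Without something playing the role of this scaling-plus-integration step (or a genuinely new treatment of three-point supports valid for all $d$), your outline does not yield the theorem.
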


Since $\abs {Z-\E Z}\le \alpha$ implies $\Diam Z\le 2\alpha$, Theorem~\ref{main}
has a simple consequence, which is a complex version of an inequality
used many times in proving Azuma-type inequalities.
\begin{cor}\label{simplecor}
 Let $Z$ be a complex random variable with $\abs{Z-\E Z}\le \alpha$ a.s..
 Then \[ \abs{\E e^{Z-\E Z}-1} \le e^{\alpha^2/2}-1. \]
\end{cor}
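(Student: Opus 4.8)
The plan is to obtain Corollary~\ref{simplecor} as an immediate consequence of Theorem~\ref{main}, the whole point being that the pointwise bound $\abs{Z-\E Z}\le\alpha$ is a (strictly stronger) special case of a diameter bound. So the first and only real step is the reduction already flagged in the text: assuming $\abs{Z-\E Z}\le\alpha$ a.s., let $Z_1,Z_2$ be independent copies of $Z$; then almost surely
\[
  \abs{Z_1-Z_2}\le \abs{Z_1-\E Z}+\abs{\E Z-Z_2}\le \alpha+\alpha=2\alpha
\]
by the triangle inequality, so by the definition~\eqref{diam1} we get $\Diam Z\le 2\alpha$. (One should note in passing that $\E Z$ is well-defined and finite here because $Z$ is bounded, so the expression $\E e^{Z-\E Z}$ makes sense.)

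The second step is simply to invoke Theorem~\ref{main} with $d=2\alpha$, which yields $\abs{\E e^{Z-\E Z}-1}\le e^{(2\alpha)^2/8}-1$, and then to simplify the exponent via $(2\alpha)^2/8=\alpha^2/2$ to arrive at $\abs{\E e^{Z-\E Z}-1}\le e^{\alpha^2/2}-1$, which is exactly the claim.

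I do not expect any genuine obstacle: all the work lives in Theorem~\ref{main}, and the corollary is two lines. The only thing worth a remark is that the bound is presumably not optimal under the stronger hypothesis $\abs{Z-\E Z}\le\alpha$ — one throws away a factor by replacing a disk constraint of radius $\alpha$ with a diameter constraint $2\alpha$ — but the constant $e^{\alpha^2/2}$ is precisely the quantity appearing in the classical real Azuma–Hoeffding argument, so matching it is the natural and sufficient goal, and sharpening it is not the aim of this corollary.
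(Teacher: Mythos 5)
Your proof is correct and is exactly the paper's argument: the remark preceding the corollary derives it from Theorem~\ref{main} via the same observation that $\abs{Z-\E Z}\le\alpha$ a.s.\ forces $\Diam Z\le 2\alpha$ (by the triangle inequality applied to independent copies), after which $d=2\alpha$ gives $e^{(2\alpha)^2/8}-1=e^{\alpha^2/2}-1$. Nothing further is needed.
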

Note that in both the theorem and its corollary, distrubutions
supported on $\{-1,+1\}$ are enough to show that the constants
(respectively $\tfrac18$ and $\tfrac12$) cannot be reduced.

Corollary~\ref{simplecor} and weaker versions of Theorem~\ref{main}
can be proved by simpler means.
For example, the referee noted that Corollary~\ref{simplecor}
(which implies Theorem~\ref{main} with constant~$\tfrac12$
instead of  $\tfrac18$) can be proved by representing complex numbers
by real matrices and applying result of Tropp~\cite[Lemmas 7.6--7]{Tropp}.

Hoeffding actually found the best possible bound on $e^{X-\E X}$ in the
real case, as we recall in the next section.  
In Section~\ref{ss:small} we show that for $d\le 3.12$
the same tighter bound holds in the complex case too, making use
of a lemma that only random variables with support of at most three
points need to be considered.  Then in Section~\ref{ss:main} we complete
the proof of Theorem~\ref{main} for all~$d$.

\section{Results}

Hoeffding's paper~\cite{Hoeffding} used the convexity of the exponential
function to find the tightest possible bound in the real case.
For $d>0$,
consider the random variable $X_d$ supported on $\{0,d\}$
with 
\[
    P(X_d=d) = \frac{e^{d}-1-d}{d(e^{d}-1)}
       = \dfrac12 - \dfrac1{12d} + O(d^3).
\]
We find that $\E e^{X_d-\E X_d}-1 = G(d)$, where
\begin{align*}
    G(d) &= \frac{
       \exp\Bigl(-\frac{e^d-1-d}{e^d-1}\Bigr)
       - 2\exp\Bigl(\frac{d e^d-e^d+1}{e^d-1}\Bigr)
       + \exp\Bigl(\frac{2d e^d-e^d-d+1}{e^d-1}\Bigr)}
       {d(e^d-1)} - 1\\
       &= \dfrac{1}{8}d^2 + \dfrac{7}{1152}d^4 + O(d^6)
       = \exp\( \dfrac{1}{8}d^2 - \dfrac{1}{576}d^4 + O(d^6)\) - 1.
\end{align*}
We can now state Hoeffding's bound in its strongest form.
\begin{lemma}\label{Hoeff3}
 Let $X$ be a real random variable such that $a\le X\le b$.  Then
  \[
      \abs{\E e^{X-\E X} -1}\le G(b-a) \le e^{(b-a)^2/8}-1,
  \]
  where the first inequality holds with equality if and only if
  $X=X_{b-a}+a$ almost surely.
\end{lemma}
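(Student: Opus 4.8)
The plan is to run Hoeffding's convexity argument while tracking the equality cases. Since both $\E e^{X-\E X}$ and the length $b-a$ are invariant under the shift $X\mapsto X-a$, it suffices to treat a variable $X$ with $0\le X\le d$, where $d:=b-a$ (and we may assume $d>0$, the case $d=0$ being trivial), and to prove $0\le\E e^{X-\E X}-1\le G(d)$ with equality on the right precisely when $X=X_d$ almost surely; the stated bound on $\abs{\E e^{X-\E X}-1}$ then follows since $G(d)\ge0$. The left inequality is just Jensen applied to the convex map $x\mapsto e^x$: $\E e^{X-\E X}\ge e^{\E(X-\E X)}=1$. So only the right inequality, and its equality case, need work.

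For that, I would use the chord bound $e^x\le 1-x/d+(x/d)e^d$ for $x\in[0,d]$, which holds with equality exactly when $x\in\{0,d\}$. Taking expectations and writing $p:=\E X/d$, which lies in $[0,1]$ because $0\le\E X\le d$, gives $\E e^X\le 1-p+p\,e^d$, hence $\E e^{X-\E X}\le L(p)$ with $L(p):=e^{-pd}(1-p+p\,e^d)$. Note that $L(p)$ is exactly $\E e^{Y-\E Y}$ for the two-point variable $Y$ on $\{0,d\}$ with $P(Y=d)=p$, so the problem has been reduced to maximising $L$ over $p\in[0,1]$. Differentiating, $L'(p)=e^{-pd}\bigl[(e^d-1-d)-pd(e^d-1)\bigr]$; the bracket is affine in $p$ with negative slope, so $L'$ vanishes at the single point $p^\ast=(e^d-1-d)/\bigl(d(e^d-1)\bigr)$, which lies in $(0,1)$ because $0<e^d-1-d<d(e^d-1)$ for $d>0$. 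Since $L(0)=L(1)=1$, this $p^\ast$ is the unique global maximiser on $[0,1]$. But $p^\ast$ equals $P(X_d=d)$ as displayed before the lemma, so $\max_{[0,1]}L=L(p^\ast)=\E e^{X_d-\E X_d}=1+G(d)$ by the evaluation recorded there, giving $\E e^{X-\E X}\le 1+G(d)$. Equality forces equality in the chord step, hence $X\in\{0,d\}$ a.s., and forces $p=\E X/d=p^\ast$; together these say exactly $X=X_d$.

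For the last inequality $G(d)\le e^{d^2/8}-1$, I would use the standard second-derivative estimate: fixing $p\in[0,1]$, write $\log L(p)=\psi(d)$ where $\psi(u):=-pu+\log(1-p+p\,e^u)$. Then $\psi(0)=\psi'(0)=0$, and setting $q:=p\,e^u/(1-p+p\,e^u)\in[0,1]$ one computes $\psi''(u)=q(1-q)\le\frac14$, so Taylor's theorem gives $\psi(u)\le u^2/8$; taking $u=d$ and $p=p^\ast$ yields $1+G(d)=L(p^\ast)\le e^{d^2/8}$. The argument is classical, so I anticipate no genuine obstacle; the only delicate point is the bookkeeping for equality — recognising that the maximiser $p^\ast$ is the same number as $P(X_d=d)$, so the two equality conditions merge into $X=X_d$, and that the identity $L(p^\ast)=1+G(d)$ is just the computation already performed before the lemma. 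The Taylor coefficients of $G$ quoted there drop out of the same evaluation.
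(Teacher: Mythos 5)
Your proof is correct and follows essentially the route the paper relies on: the lemma is stated as Hoeffding's sharpest bound, obtained from the convexity (chord) argument for $e^x$ on $[0,d]$, which is exactly what you carry out, with the maximisation over $p$ identifying $p^\ast=P(X_d=d)$ and the standard $\psi''\le\tfrac14$ estimate giving $G(d)\le e^{d^2/8}-1$. Your tracking of the equality case (strictness of the chord bound off $\{0,d\}$ plus uniqueness of the maximiser $p^\ast$) correctly supplies the "if and only if" clause, so nothing further is needed.
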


\subsection{The complex case: tight bound for small diameter}\label{ss:small}

Let $Z_1,\ldots,Z_n$ be complex random variables and 
let $c_1,\ldots,c_n$ be nonnegative real numbers with
$c_1+\cdots+c_n=1$.
Define the \textit{mixture} $Z=\Mix_{c_1,\ldots,c_n}(Z_1,\ldots,Z_n)$ by
\[
   P(Z\in A) = \sum_{k=1}^n c_k\,P(Z_k\in A)
\]
for every measurable set~$A\subseteq\Complexes$.  A standard
property of mixtures is that
 \begin{equation}\label{expmix}
     \E F\(\Mix_{c_1,\ldots,c_n} (Z_1,\ldots,Z_n)\)
     = \sum_{j=1}^n c_j \E F(Z_j)
 \end{equation}
 for any measurable function $F:\Complexes\to\Complexes$
 for which the expectations exist.

Let $\calZ_d$ be the class of all complex random variables $Z$ with
$\E Z=0$ and $\Diam Z\le d$, and let $\calZ_d^{(k)}$ be the subclass
of $\calZ_d$ consisting of those variables supported on at most $k$~points.

\begin{lemma}\label{max}
Let $F:\{ z \in\Complexes \st \abs z \le d\} \to\Complexes$ be continuous.
Then
\[
    \sup_{Z\in\calZ_d}\, \abs{\E F(Z)} =  \sup_{Z\in\calZ_d^{(3)}} \abs{\E F(Z)}.
\]
\end{lemma}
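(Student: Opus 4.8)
The plan is to reduce an arbitrary $Z\in\calZ_d$ to a finitely-supported variable and then shrink the support to three points by a convexity/extreme-point argument. First I would argue that the supremum over $\calZ_d$ is approached by finitely supported variables: given any $Z\in\calZ_d$, its support has diameter at most $d$, so for any $\eps>0$ we may partition (a bounded set containing) the support into finitely many measurable cells of small diameter, and replace $Z$ by the variable $Z'$ taking one representative value in each cell with the corresponding probability mass. Since $F$ is continuous on the compact disk $\{\abs z\le d\}$ it is uniformly continuous there, so $\abs{\E F(Z)-\E F(Z')}$ is small; the only subtlety is restoring $\E Z'=0$, which can be done by a tiny translation, again affecting $\E F$ by an amount controlled by uniform continuity (and keeping the diameter at most $d+\eps$, which one absorbs by a limiting argument or by working with $\calZ_{d'}$ for $d'\downarrow d$). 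Hence $\sup_{\calZ_d}\abs{\E F(Z)}$ equals the supremum over all finitely supported members of $\calZ_d$.

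Next, fix a finite set $S=\{z_1,\ldots,z_n\}\subseteq\Complexes$ of diameter at most $d$, and consider the compact convex polytope $\Pi_S$ of probability vectors $(p_1,\ldots,p_n)$ with $\sum_k p_k z_k=0$ (two real linear constraints plus $\sum_k p_k=1$). For each such vector the value $\E F(Z)=\sum_k p_k F(z_k)$ is an affine function of $(p_1,\ldots,p_n)$, so $\abs{\E F(Z)}^2$ — or rather, to stay linear, the real part of $e^{-i\theta}\E F(Z)$ for any fixed phase $\theta$ — is a linear functional on $\Pi_S$ and therefore attains its maximum at a vertex of $\Pi_S$. A vertex of a polytope cut out of the simplex by two extra linear equalities has at most $1+2=3$ nonzero coordinates. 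Therefore for every phase $\theta$ the quantity $\operatorname{Re}(e^{-i\theta}\E F(Z))$ is maximised, over variables supported on $S$, by one supported on at most three of the $z_k$; choosing $\theta$ to be the argument of the optimal $\E F(Z)$ shows $\abs{\E F(Z)}$ itself is maximised there. Taking the supremum over all finite $S$ and invoking \eqref{expmix} to see that restricting the support only decreases the class gives $\sup_{\calZ_d}\abs{\E F(Z)}=\sup_{\calZ_d^{(3)}}\abs{\E F(Z)}$; the reverse inequality is trivial since $\calZ_d^{(3)}\subseteq\calZ_d$.

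The main obstacle is the first step: the passage from general $Z$ to finitely supported $Z$ while simultaneously maintaining the constraint $\E Z=0$ and not inflating the diameter past $d$. Discretising is routine, but the discretised variable will generically have nonzero mean, and correcting the mean by translation enlarges the support and hence potentially the diameter. I would handle this by noting the diameter is translation-invariant — $\Diam(Z-\E Z)=\Diam Z$ — so the right move is to discretise $Z-\E Z$ itself (already mean zero is not preserved by discretisation, but the perturbation of the mean is $O(\eps)$), then translate back by that small $O(\eps)$ vector; the diameter is unchanged by the translation, and only the discretisation step perturbs it, by at most $O(\eps)$, which is removed in the limit $\eps\to0$ together with the continuity of $F$. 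One should also confirm the three-point optimum is actually attained (not merely approached): since $\calZ_d^{(3)}$, viewed as triples of points in the closed disk of radius $d$ together with a probability vector satisfying the closed constraint $\sum p_k z_k=0$, is compact and $\E F$ is continuous on it, the supremum over $\calZ_d^{(3)}$ is a maximum, which is convenient though not strictly needed for the stated equality.
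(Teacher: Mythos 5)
Your proposal is correct and has the same two-stage architecture as the paper's proof (the ``Carath\'eodory Principle''): first approximate an arbitrary $Z\in\calZ_d$ by a finitely supported member, then reduce a finite support to at most three points. The second stage, however, is argued differently. The paper shows by induction that any finitely supported $Z\in\calZ_d$ is a \emph{mixture} of members of $\calZ_d^{(3)}$, using the planar Carath\'eodory fact that a finite point set with the origin in its convex hull has a subset of at most three points with the origin in its convex hull, and then applies \eqref{expmix} so that $\abs{\E F(Z)}\le\max_j\abs{\E F(Z_j)}$. You instead fix a phase $\theta$, observe that $\Re\(e^{-i\theta}\E F(Z)\)$ is linear on the polytope of probability vectors satisfying $\sum_k p_k=1$ and $\sum_k p_k z_k=0$, and use the standard fact that a vertex of a polytope cut from the simplex by two further equalities has at most three nonzero coordinates. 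The two arguments are equivalent in substance (vertices of that polytope are exactly the $\le 3$-point mean-zero distributions), but the mixture route yields the slightly stronger structural statement that the paper reuses later (e.g.\ that three collinear points reduce to two-point mixtures), while your LP-vertex route avoids the induction and handles the modulus cleanly via the phase choice. On the first stage you are more explicit than the paper, and you correctly identify the real subtlety (restoring $\E Z'=0$ without inflating the diameter); one small patch is needed in your fix: after discretising and translating, the support can leave the disk $\abs z\le d$ on which $F$ is defined, so ``working with $\calZ_{d'}$ for $d'\downarrow d$'' is not literally available. This is repaired immediately either by pre-shrinking to $(1-\delta)Z$ before discretising, or more cleanly by taking $Z'$ to be the conditional expectation of $Z$ given a finite partition into small cells: then $\E Z'=0$ exactly, the support of $Z'$ stays in the convex hull of the support of $Z$ so $\Diam Z'\le d$, and $\abs{Z-Z'}$ is uniformly small, so uniform continuity of $F$ finishes the approximation with no translation at all. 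This is a refinement, not a gap; your argument is sound.
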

\begin{proof}
 This is an example of the ``Carath\'eodory Principle'', see for example
  \cite{Hoeffding0,MR,Pinelis}.
  Since we didn't find a statement in the
  literature that exactly matches our needs, we outline the proof.
  
  First, by a simple induction, any $Z\in\calZ_d$ with finite support can
  be written as a mixture of members of $\calZ_d^{(3)}$.  This is true
  because, for any finite set of points in $\Complexes$ having the origin
  in its convex hull, there is a subset of three or fewer points having the
  origin in its convex hull.
  By~\eqref{expmix}, this implies that the lemma holds when $Z$ has
  finite support.
  
  For more arbitrary $Z\in\calZ_d$, we can use the continuity of~$F$
  to show that for any $\eps>0$ there is $Z'\in\calZ_d$ of finite support
  such that $\E Z'=0$ and $\abs{\E F(Z) - \E F(Z')} \le \eps$.
  Allowing $\eps$ to tend to~0 completes the proof.
\end{proof}

We now return to Hoeffding's bound using
Lemma~\ref{max} for $F(z)=e^z-1$.
Obviously $\abs{\E e^{Z-\E Z}-1}=0$ if
$Z$ is constant, so we need to consider the cases of
2-point and 3-point supports.
Since a random variable supported on 3 collinear points is a mixture
of two random variables supported on 2 points, in the case of~3~points
only the non-collinear case needs to be considered.

\begin{lemma}\label{twopoints}
  If $Z\in\calZ_d^{(2)}$, then $\abs{\E e^Z-1}\le G(d)$.
\end{lemma}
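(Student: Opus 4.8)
The plan is to use $\E Z=0$ to put a two-point distribution into a convenient one-parameter form and then reduce the complex estimate to the real bound of Lemma~\ref{Hoeff3} by a power-series argument. Suppose $Z$ takes value $z_1$ with probability $p_1$ and $z_2$ with probability $p_2=1-p_1$. Put $p=\min(p_1,p_2)\le\tfrac12$ and, after relabelling the atoms if needed, assume $P(Z=z_1)=p$. Writing $w=z_1-z_2$, the constraint $\E Z=0$ forces $z_1=(1-p)w$ and $z_2=-pw$, while $\Diam Z\le d$ becomes $\abs w\le d$. Hence $\E e^Z-1=g_p(w)$, where
\[
   g_p(w):=p\,e^{(1-p)w}+(1-p)\,e^{-pw}-1,
\]
and it suffices to prove $\abs{g_p(w)}\le G(d)$ whenever $\abs w\le d$ and $0\le p\le\tfrac12$. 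Since $g_p$ is entire, a plain maximum-modulus argument reduces this to the circle $\abs w=d$, but that alone does not locate the extremal direction; the real work is to show the maximum occurs at $w=d$.

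The crux is to inspect the Taylor expansion $g_p(w)=\sum_{k\ge0}a_k\,w^k/k!$ about $w=0$, where $a_k=p(1-p)^k+(1-p)(-p)^k$. The constant term cancels the $-1$, so $a_0=0$, and $a_1=p(1-p)-(1-p)p=0$; thus the series begins at $k=2$ with $a_2=p(1-p)$. For even $k$ one has $a_k=p(1-p)\bigl((1-p)^{k-1}+p^{k-1}\bigr)\ge0$ for every~$p$, while for odd $k\ge3$, $a_k=p(1-p)\bigl((1-p)^{k-1}-p^{k-1}\bigr)$, which is nonnegative precisely because $p\le\tfrac12$ gives $1-p\ge p$. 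So every coefficient of $g_p$ is nonnegative; this is the only place the restriction $p\le\tfrac12$ is used, and checking this sign pattern is the step I expect to be the main obstacle.

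With $a_k\ge0$ the conclusion follows quickly. First, $\abs{g_p(w)}=\Abs{\sum_{k\ge2}a_k w^k/k!}\le\sum_{k\ge2}a_k\abs{w}^k/k!=g_p(\abs w)$. Next, $g_p$ is nondecreasing on $[0,\infty)$ --- either from the nonnegative coefficients, or directly from $g_p'(t)=p(1-p)\bigl(e^{(1-p)t}-e^{-pt}\bigr)\ge0$ --- and $g_p(0)=0$, so $g_p(\abs w)\le g_p(d)$. Finally, $g_p(d)=\E e^{X-\E X}-1$ for the real random variable $X$ supported on $\{0,d\}$ with $P(X=d)=p$; since $0\le X\le d$, Lemma~\ref{Hoeff3} gives $g_p(d)\le G(d)$. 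Chaining the three inequalities yields $\abs{\E e^Z-1}=\abs{g_p(w)}\le G(d)$, as required.
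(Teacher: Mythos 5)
Your proposal is correct and follows essentially the same route as the paper: both expand $\E e^Z-1$ as a series starting at the quadratic term, arrange (via $p\le\tfrac12$, which plays the role of the paper's sign-flip of the real factor $X$ in $Z=e^{i\theta}X$) that all coefficients/moments are nonnegative, apply the triangle inequality to pass to the real axis, and finish with Lemma~\ref{Hoeff3}. The only difference is cosmetic parametrisation, so no further comparison is needed.
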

\begin{proof}
  The case of real $Z$ is treated in Lemma~\ref{Hoeff3}.
  More generally, since $\E Z=0$,
  $Z=e^{i\theta}X$ where $X$ is real.
  Since $\E X=0$, there are $x,x'\ge 0$ such that $X$
  has support $-x$ with probability $x'/(x+x')$ and
  $x'$ with probability $x/(x+x')$.
  For any odd $k$ we can calculate that
\[
    \E X^k = \frac{x^k}{1+\rho} (\rho^k-\rho),
    \text{~ where $\rho=x'/x$.}
\]
This shows that either $X$ or $-X$ has only nonnegative moments.
By adding $\pi$ to $\theta$ if necessary, we assume that the former holds.
Now, recalling that $\E Z=\E X=0$, we can calculate
\begin{align*}
  \abs{\E e^Z-1} &= \Bigl|\, \sum_{k=2}^\infty \dfrac{1}{k!}\E Z^k \Bigr|
                          = \Bigl|\, \sum_{k=2}^\infty \dfrac{1}{k!}e^{ik\theta} \E X^k \Bigr| \\
                          &{}\le \sum_{k=2}^\infty \dfrac{1}{k!}\E X^k = \E e^X-1,
\end{align*}
which implies that $\abs{\E e^Z-1}\le G(d)$ by the real case.
\end{proof}

The case of a support of three points is considerably more difficult.
To begin, define
\[
   d_0 = \sup \, \{ d\in\Reals \st \Re \E e^Z\ge 0\text{~for all~}Z\in\calZ_d \}.
\]
Since $\Re e^z\ge 0$ for $\abs{z}\le \pi/2$ we see that $d_0\ge\pi/2$.
The actual value is almost twice as large.

\begin{lemma}\label{real0}
$d_0\approx 3.120491233$.
\end{lemma}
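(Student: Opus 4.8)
The plan is to characterise $d_0$ as the least diameter of a mean-zero distribution forcing $\Re\E e^Z$ down to~$0$, reduce that extremal problem to a finite-dimensional one by the Carath\'eodory principle, and solve the resulting transcendental equations; the number $3.120491233$ is their numerical root. To begin, the argument proving Lemma~\ref{max} applies unchanged with the continuous real-valued function $F(z)=\Re e^z=e^{\Re z}\cos(\Im z)$ in place of $e^z-1$, since $\E[\Re e^Z]=\Re\E e^Z$ is still an affine functional of the law of~$Z$; hence $\inf_{Z\in\calZ_d}\Re\E e^Z=\inf_{Z\in\calZ_d^{(3)}}\Re\E e^Z$ for every~$d$. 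Because $0$ lies in the convex hull of the support of any $Z\in\calZ_d^{(3)}$, that support lies in $\{\abs z\le d\}$, so this infimum is taken over a compact set of parameters, is attained, and defines a continuous non-increasing function $\phi(d)$ with $\phi(0)=1$; the equal-weight distribution on $\{\pm i\pi/2\}$ shows $\phi(\pi)\le0$. Therefore $\phi$ has a zero, and
\[
  d_0=\min\bigl\{\,\Diam Z\ :\ \E Z=0,\ \text{$Z$ supported on at most 3 points},\ \Re\E e^Z\le0\,\bigr\},
\]
with the constraint $\Re\E e^Z=0$ tight at any minimiser (as $\phi(d_0)=0$) and at least one distance $\abs{z_j-z_k}$ equal to~$d_0$.

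Next I would dispose of the degenerate supports. Three collinear points form a mixture of two-point supports, so by~\eqref{expmix} it suffices to minimise $\Diam Z$ over two-point supports and over non-collinear three-point supports separately. A two-point mean-zero support can be taken to have values $z$ and $-\lambda z$ with $\lambda>0$ (so that $0$ lies between them) and weights $\tfrac{\lambda}{1+\lambda},\tfrac1{1+\lambda}$; writing $z=p+iq$, one has $\Diam Z=(1+\lambda)\sqrt{p^2+q^2}$ and $\Re\E e^Z\le0$ precisely when $\lambda e^{(1+\lambda)p}\cos q+\cos(\lambda q)\le0$. Minimising $(1+\lambda)\sqrt{p^2+q^2}$ subject to equality here is a three-variable Lagrange problem whose optimal diameter I expect to be about $3.124$, hence strictly larger than~$d_0$. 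It is worth recording that \emph{every} configuration symmetric under complex conjugation --- a conjugate pair, or a real point together with a conjugate pair --- has diameter at least~$\pi>d_0$; so the extremal configuration is necessarily \emph{asymmetric}, and cannot be produced by a symmetrisation argument.

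It then remains to minimise $\Diam Z$ over non-collinear three-point supports $\{z_1,z_2,z_3\}$ with weights $\{p_1,p_2,p_3\}$, subject to $\sum_j p_jz_j=0$ and $\sum_j p_j e^{\Re z_j}\cos(\Im z_j)\le0$ --- after eliminating the two mean-zero equations and quotienting by conjugation, a five-parameter family. At the minimiser, $\Re\E e^Z=0$ and at least two of the distances $\abs{z_j-z_k}$ equal~$d_0$, and the Karush--Kuhn--Tucker stationarity conditions yield a closed system of transcendental equations in the $z_j$, the $p_j$ and the Lagrange multipliers. Solving this system numerically should produce a genuinely three-point, asymmetric extremiser with $d_0\approx3.120491233$, which together with the two-point bound of the previous step gives the lemma.

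The main obstacle is this last step. The objective is a non-smooth maximum of three distances, the stationarity system has no closed-form solution, and --- most importantly --- one must certify that the computed critical point is the \emph{global} minimum, equivalently that $\Re\E e^Z\ge0$ for every $Z$ of diameter below~$d_0$. I would address this either by confining the search to an explicit compact region of parameter space and bounding $\Re\E e^Z$ there by rigorous (e.g.\ interval-arithmetic) estimates, or by first establishing an a~priori structural description of the extremal triangle --- for instance that it is isosceles or equilateral with a prescribed orientation relative to the real axis --- so as to cut the free parameters down to one or two.
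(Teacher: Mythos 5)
Your reduction via Lemma~\ref{max} and the reformulation of $d_0$ as a constrained minimisation are fine, but the proposal stops exactly where the real work lies: you leave open how to certify that no non-collinear three-point configuration does better than the two-point ones, and you yourself flag this (interval arithmetic over a five-parameter KKT system, or an unproven structural ansatz) as the main obstacle. The paper closes precisely this gap with a short variational argument instead of a global search: if for some $d<d_0$ the infimum of $\Re\E e^Z$ over $\calZ_d^{(3)}$ were positive, attained at three non-collinear points, and smaller than the two-point infimum, one keeps the support $\{z_1,z_2,z_3\}$ fixed and moves the mean to $x+iy$; the three probabilities depend affinely on $(x,y)$, so $\E e^{Z_{(x,y)}-\E Z_{(x,y)}}=e^{-x-iy}(Ax+By+C)$, first-order stationarity forces $A-C$ and $B-iC$ to be purely imaginary, and the resulting second-order matrix always has a negative eigenvalue when $c_0=\Re\E e^Z>0$. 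Hence no such local minimum exists, the extremal problem collapses onto $\calZ_d^{(2)}$, and the only numerics needed are for the three-parameter two-point family. Without an argument of this kind (or a genuinely rigorous certified global optimisation, which you only sketch), your plan does not yet prove the lemma.

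There is also a factual problem with the expected outcome that would derail your final step: you predict the two-point problem gives an optimal diameter of about $3.124$, strictly larger than $d_0$, so that the true extremiser is a genuinely three-point asymmetric configuration. The opposite is what happens: the two-point infimum itself crosses zero at $d_2\approx 3.120491233$, attained with the two support points at distance $\ell=d$, probability parameter $x\approx 0.6365$ and direction $\theta\approx 1.9199$ (an asymmetric configuration, consistent with your observation that conjugation-symmetric ones need diameter at least $\pi$), and the perturbation argument above shows $d_0=d_2$. So the five-parameter stationarity system you propose to solve has no genuinely three-point minimiser to find, and the value $3.120491233$ comes from the two-point family you were planning to discard.
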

\begin{proof}
  By applying  Lemma~\ref{max} to $F(z)=e^d-\Re z$, we see that
  only supports of two points or three non-collinear points
  need to be considered.
  
  For supports of two points, we have by definition
  \[
      \inf \{ \Re\E e^Z\st Z\in\calZ_d^{(2)} \}
      = \inf_{\ell\in[0,d], x\in[0,1], \theta\in[-\pi,\pi]}
         \Re \( xe^{\ell(1-x)e^{i\theta}} + (1-x)e^{-\ell xe^{i\theta}} \).
  \]
There appears to be no closed form for the infimum.
  However, careful numerical
  computation shows that the infimum crosses~0 at
  $d=d_2\approx 3.120491233$, which occurs when
  $\ell=d$, $x\approx 0.636527202$ and $\theta\approx 1.9198934984$.
  
  If $d_0<d_2$, there is some $d<d_0$ such that 
  \[
     0 < \inf \{ \Re\E e^Z\st Z\in\calZ_d^{(3)}\} 
        <  \inf \{ \Re\E e^Z\st Z\in\calZ_d^{(2)}\}.
  \]   
  By the compactness of $\calZ_d^{(3)}$, the first infimum
  is realised by some $Z\in \calZ_d^{(3)}$ with a support
  of three non-collinear points $\{z_1,z_2,z_3\}$.  Define
  $Z_{(x,y)}$ to be the random variable with the same support, but
   mean at $x+iy$ (thus $Z=Z_{0,0}$).
   As is well known, $x,y$ determine the probabilities
   at $z_1,z_2,z_3$ linearly, so for some complex constants $A,B,C$
 we have
 \begin{align}
     \E e^{Z_{(x,y)}-\E Z_{(x,y)}}-1 &= e^{-x-iy}(Ax + By + C)-1 \notag\\
         &= C-1 + (A-C)x + (B-iC)y + O(x^2+y^2), \label{expan}
 \end{align}
 valid whenever $x+iy$ lies in the convex hull of $\{z_1,z_2,z_3\}$.
 
 In order for $Z$ to be a local minimum for the real part, the coefficients
 of ~$x$ and~$y$ in~\eqref{expan} must be purely imaginary.  Therefore,
 for some $v,w\in\Reals$, $A=C+iv$ and $B=iC+iw$.  Substituting in these
 values and writing $C=c_0+ic_1$ we find
  \[
    \E \Re{e^{Z_{(x,y)}-(x+iy)}} = c_0
       + [x,y] R [x,y]^T + O(\abs x^3+\abs y^3),
 \]
 where
 \[
     R = \left[ \begin{matrix}
                -\tfrac12 c_0  & \tfrac12 (c_1+v) \\[0.5ex]
               \tfrac12 (c_1+v) & \tfrac12 c_0 +w
            \end{matrix}\right].
 \]
 The smallest eigenvalue of $R$ is
 $\tfrac12 w - \tfrac12 \sqrt{(w+c_0)^2 + (v + c_1)^2}$,
 which is clearly negative for $c_0>0$.  Thus $(x,y)=(0,0)$ is not a 
 local minimum for $\E\Re\,{e^{Z_{(x,y)}-\E Z_{(x,y)}}}$, contrary to our
 assumption.  This proves that $d_0=d_2$.
\end{proof}

\begin{thm}\label{smalld}
 If $\Diam Z=d\le d_0$, then $\abs{\E e^Z-1}\le G(d)$.
\end{thm}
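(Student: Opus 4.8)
The plan is to reduce to random variables with support of at most three points and then run a compactness‑plus‑perturbation argument modelled on the proof of Lemma~\ref{real0}. First, replacing $Z$ by $Z-\E Z$ changes neither $\Diam Z$ nor the quantity to be bounded, so we may assume $\E Z=0$, i.e.\ $Z\in\calZ_d$; this forces the support of $Z$ into $\{\abs z\le d\}$, since any support point $z$ satisfies $\abs z=\abs{\E(z-Z)}\le\E\abs{z-Z}\le\Diam Z\le d$. By Lemma~\ref{max} with the continuous function $F(z)=e^z-1$ it suffices to bound $\abs{\E e^Z-1}$ for $Z\in\calZ_d^{(3)}$. The constant case is trivial, the two‑point case is Lemma~\ref{twopoints}, and a collinear three‑point $Z$ is a mixture $\Mix_{c_1,c_2}(Z_1,Z_2)$ with $Z_1,Z_2\in\calZ_d^{(2)}$, whence $\abs{\E e^Z-1}\le c_1\abs{\E e^{Z_1}-1}+c_2\abs{\E e^{Z_2}-1}\le G(d)$ by~\eqref{expmix} and Lemma~\ref{twopoints}. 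So the whole content is the case of a genuine non‑collinear three‑point support.

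Suppose for contradiction that $\sup\{\abs{\E e^Z-1}:Z\in\calZ_d^{(3)}\}>G(d)$. Regarding $\calZ_d^{(3)}$ as the weakly compact set of probability measures on $\{\abs z\le d\}$ supported on at most three points, with barycentre $0$ and diameter at most $d$, and noting that $Z\mapsto\abs{\E e^Z-1}$ is weakly continuous, the supremum is attained at some $Z^*$; by the previous paragraph $Z^*$ has a non‑collinear three‑point support $\{z_1,z_2,z_3\}$ with $0$ interior to the triangle and all three probabilities positive. Write $\E e^{Z^*}-1=Me^{i\phi}$ with $M>G(d)\ge0$.

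Now I imitate Lemma~\ref{real0}, but for the modulus $\abs{\E e^Z-1}$ in place of its real part. Keeping the support fixed, let $Z_{(x,y)}$ be the variable on $\{z_1,z_2,z_3\}$ with mean $x+iy$, so that for small $(x,y)$ we have $W_{(x,y)}:=Z_{(x,y)}-(x+iy)\in\calZ_d^{(3)}$ with $\E e^{W_{(x,y)}}-1=e^{-x-iy}(Ax+By+C)-1$ and $C=\E e^{Z^*}$, by~\eqref{expan}. Since $\Re(e^{-i\phi}u)\le\abs u$ with equality at $u=Me^{i\phi}$, the smooth function $g(x,y):=\Re\bigl(e^{-i\phi}(e^{-x-iy}(Ax+By+C)-1)\bigr)$ satisfies $g(x,y)\le\abs{\E e^{W_{(x,y)}}-1}\le M=g(0,0)$ near the origin, so $(0,0)$ is a local maximum of $g$. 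The first‑order conditions force $A=C+ive^{i\phi}$ and $B=i(C+we^{i\phi})$ for some $v,w\in\Reals$, and then the Hessian of $g$ at $(0,0)$ has precisely the shape of the matrix $R$ in Lemma~\ref{real0}, with $e^{-i\phi}C=M+e^{-i\phi}$ in the role of $C$; as in that lemma, whether $(0,0)$ can be a genuine maximum is governed by the sign of $M+\cos\phi$ and, more relevantly, of $\Re\E e^{Z'}$ for the perturbed copies $Z'$ of $Z^*$ introduced below. Now $d\le d_0$ guarantees $\Re\E e^{Z'}\ge0$, and $>0$ when $d<d_0$, for \emph{every} $Z'\in\calZ_d$; applying this to $Z^*$, to the rotations $e^{i\alpha}Z^*$, and (when $\Diam Z^*<d$) to the dilations $tZ^*$ — all of which lie in $\calZ_d$ — and combining the associated first‑ and second‑order optimality conditions should produce a direction in which $\abs{\E e^Z-1}$ strictly increases, contradicting maximality of $Z^*$. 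Hence the supremum equals $G(d)$, which is the theorem for $d<d_0$; the case $d=d_0$ follows by applying the result to $tZ$ for $t<1$ and letting $t\to1$.

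The crux, and the main obstacle, is this last step. The two‑parameter family $\{W_{(x,y)}\}$ by itself is insufficient: one can choose the two free real parameters $v,w$ so that the Hessian of $g$ at $(0,0)$ is negative semidefinite, so its second‑order conditions are satisfiable. One must therefore exploit the richer family of admissible perturbations of $Z^*$ — translations, rotations, dilations, and shears of the support, with the probabilities adjusted to preserve mean $0$ — and show that the full system of first‑ and second‑order optimality conditions is incompatible with $\Re\E e^{Z'}\ge0$ holding throughout $\calZ_d$. Arranging this argument so as not to collapse into an unwieldy six‑dimensional Hessian computation is where the real work lies.
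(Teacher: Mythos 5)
There is a genuine gap, and it sits exactly at the point you flag as ``the crux''. Your reduction (centering $Z$, Lemma~\ref{max} with $F(z)=e^z-1$, compactness, dispatching constant, two-point and collinear three-point supports via Lemma~\ref{twopoints} and \eqref{expmix}) matches the paper. But the essential step --- ruling out a maximiser with non-collinear three-point support --- is not carried out: you end by asserting that some combination of rotations, dilations and shears of the support ``should produce'' an increasing direction, which is a plan, not a proof. Worse, your diagnosis of why the argument stalls is mistaken. You linearise the modulus, replacing $\abs{\E e^{W_{(x,y)}}-1}$ by $g(x,y)=\Re\bigl(e^{-i\phi}(\E e^{W_{(x,y)}}-1)\bigr)$, and correctly observe that the Hessian of $g$ can be made negative semidefinite by suitable $v,w$; from this you conclude the two-parameter family $\{W_{(x,y)}\}$ is insufficient. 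It is not: the paper completes the proof using only this family, by expanding the \emph{squared modulus} $\Abs{\E e^{Z_{(x,y)}-\E Z_{(x,y)}}-1}^2=\abs{C-1}^2+[x,y]\,Q\,[x,y]^T+O(\abs x^3+\abs y^3)$ directly. The supporting-line bound $g\le\abs{\cdot}$ discards precisely the second-order gain coming from the first-order motion of $\E e^{W_{(x,y)}}-1$ in the direction orthogonal to $e^{i\phi}$; that gain reappears in $Q$ as the positive diagonal contributions $\varDelta v^2$ and $\varDelta w^2$ (with $\varDelta=\abs{C-1}$, and $A=C+iv(C-1)$, $B=iC+iw(C-1)$ from the first-order conditions).

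With those terms present the contradiction is elementary: $\operatorname{tr}Q=\varDelta(v^2+w^2-1)$, so negative semidefiniteness forces $v^2+w^2\le1$, and then $\det Q=\varDelta\bigl(-w^2c_0^2-(1-v^2-w^2)c_0-(v-wc_1)^2\bigr)<0$ because $c_0=\Re\,\E e^{Z^*}>0$ --- and this positivity of $c_0$ is exactly where the hypothesis $d<d_0$ (after the same continuity reduction you make for $d=d_0$) enters, via the definition of $d_0$ and Lemma~\ref{real0}. No six-parameter Hessian, and no rotations, dilations or shears of the support, are needed. So your write-up is not a proof as it stands: the one step carrying the theorem's content is left open, and the route you propose for closing it (enlarging the perturbation family) is a detour created by having thrown away the curvature information that the modulus itself already contains.
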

\begin{proof}
 We can rely on
 continuity to assume that $d < d_0$.
 
 By Lemma~\ref{max} with $F(z)=e^z-1$, we have
$\sup_{Z\in\calZ_d}\, \abs{\E e^Z-1} =  \sup_{Z\in\calZ_d^{(3)}} \abs{\E e^Z-1}$,
and by compactness the supremum is achieved for some $Z\in\calZ_d^{(3)}$.
We first show that no such local maximum occurs when the support
consists of 3 non-collinear points, implying that it occurs in~$\calZ_d^{(2)}$.
By Lemma~\ref{twopoints}, this will complete the proof.

So, suppose that the maximum value of $\abs{\E e^Z-1}$ for
$Z\in\calZ_d^{(3)}$ where the support of~$Z$ consists of three
non-collinear points.
 Let $Z_{(x,y)}$ be the random variable with the same support
 as~$Z$, but mean at $x+iy$.  As in the proof of Lemma~\ref{real0},
 \eqref{expan} holds.
 Since we are assuming this to be a local maximum
 for $\abs{\E e^{Z}-1}$, $A-C$ and
 $B-iC$ must be orthogonal to $C-1$, i.e. real multiples of $i(C-1)$.
 That is, for some $v,w\in\Reals$, $A=C+iv(C-1)$ and
 $B=iC+iw(C-1)$.
 Substituting in these values, writing $C=c_0+i c_1$ for
 $c_0,c_1\in\Reals$, and defining $\varDelta=\abs{C-1}$ we can expand
 \[
    \Abs{\E\,e^{Z_{(x,y)}-\E Z_{(x,y)}}-1}^2 = \abs{C-1}^2
       + [x,y] Q [x,y]^T + O(\abs x^3+\abs y^3),
 \]
 where
 \[
     Q = \left[ \begin{matrix}
               \varDelta v^2 + c_0 - c_1^2 - c_0^2 
                       & \varDelta vw - c_1 \\[0.5ex]
               \varDelta vw - c_1 & \varDelta w^2 + c_0 - 1
            \end{matrix}\right].
 \]
 We now show that $Q$ cannot be negative semidefinite.
 If $\varDelta=0$ then $\E e^Z-1=0$, which is clearly not a maximum,
 so assume $\varDelta>0$.  
 The trace of $Q$ is $\varDelta(v^2+w^2-1)$, which is impossible for
 a negative semidefinite matrix if $v^2+w^2>1$, so assume
 $v^2+w^2\le 1$.  The determinant of $Q$ is
 $\varDelta\( -w^2c_0^2 - (1-v^2-w^2)c_0 - (v-wc_1)^2\)$, which
 is negative (since $d<d_0$ implies $c_0>0$), which is also
 impossible for a negative semidefinite matrix.
 Therefore, there is no local maximum here and the proof is complete.
\end{proof}

We have no reason to believe that Theorem~\ref{smalld} requires
the condition $d\le d_0$, and expect that it is true for all~$d$.
However, the same proof is insufficient since it is possible for
local maxima to occur for supports of three points.  However,
we can now complete the proof of Theorem~\ref{main} for all~$d$.

\subsection{Proof of Theorem~\ref{main}}\label{ss:main}

We need two technical bounds whose uninteresting proofs are omitted,
and a standard result on planar sets.

\begin{lemma}\label{technical}
For $t\ge 3$ we have
\begin{align}
    G(t)+1 &\le 0.9\, e^{t^2/8}\,. \label{tech2} \\
     \sqrt{G(2t)+1} &\le 1.65\, e^{t^2/8}\, \label{tech1}
\end{align}
\end{lemma}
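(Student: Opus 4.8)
The plan is to start from an exact formula for $G$ that makes both inequalities transparent. With $X_d$ as in Section~2 and $p=P(X_d=d)=\frac{e^d-1-d}{d(e^d-1)}$, a one-line computation gives $\E e^{X_d}=1+p(e^d-1)=\frac{e^d-1}{d}$ and $\E X_d=dp=1-\frac{d}{e^d-1}$, so
\[
  G(d)+1=\E e^{X_d-\E X_d}=\frac{e^d-1}{d}\exp\Bigl(\frac{d}{e^d-1}-1\Bigr),
  \qquad
  \log\bigl(G(d)+1\bigr)=\log(e^d-1)-\log d+\frac{d}{e^d-1}-1.
\]
I would then analyse the right-hand side, exploiting that the only non-elementary pieces, $\log(e^d-1)-d=\log(1-e^{-d})$ and $d/(e^d-1)$, are exponentially small once $d\ge3$.

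For \eqref{tech2}, put $\phi(t)=\log(G(t)+1)-t^2/8$; I want $\phi(t)\le\log0.9$ for $t\ge3$. Differentiating gives $\phi'(t)=\frac{e^t}{e^t-1}-\frac1t+\frac{(1-t)e^t-1}{(e^t-1)^2}-\frac t4$. On $[3,\infty)$ the first term is decreasing, hence $\le e^3/(e^3-1)<1.053$; the third term is negative; and $\frac1t+\frac t4$ is increasing on $[2,\infty)$, hence $\ge\frac{13}{12}$. Therefore $\phi'(t)<1.053-\frac{13}{12}<0$, so $\phi$ is strictly decreasing and $\phi(t)\le\phi(3)$, and a direct evaluation of $\phi(3)=\log(G(3)+1)-\tfrac98$ confirms $\phi(3)<\log0.9$.

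For \eqref{tech1}, I would square to reduce the claim to $\log(G(2t)+1)-t^2/4\le2\log1.65=\log2.7225$ for $t\ge3$, and write the left-hand side as $\psi_0(t)+\eta(t)$ with $\psi_0(t)=2t-\log(2t)-1-\tfrac{t^2}{4}$ and $\eta(t)=\log(1-e^{-2t})+\frac{2t}{e^{2t}-1}$. Here $\psi_0'(t)=2-\tfrac1t-\tfrac t2$ vanishes on $[3,\infty)$ only at $t_\ast=2+\sqrt2$, where $\psi_0$ is maximal with $\psi_0(t_\ast)=\tfrac32+\sqrt2-\log(4+2\sqrt2)\approx0.9931$; and $\log(1-x)\le-x$ gives $0<\eta(t)\le E(t):=\frac{2t-1+e^{-2t}}{e^{2t}-1}$, which is positive and decreasing on $[3,\infty)$. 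So it suffices to bound $M(t):=\psi_0(t)+E(t)$, which I would do by splitting $[3,\infty)$ into a few subintervals and on each using the monotonicity of $\psi_0$ and of $E$ together with $\psi_0\le\psi_0(t_\ast)$; the resulting handful of evaluations all come out below $\log2.7225$.

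The hard part is \eqref{tech1}: the constant $1.65$ is close to best possible — the supremum of the left-hand side over $t\ge3$ is about $0.9994$, only just under $\log2.7225\approx1.0016$ — so the exponentially small term $\eta$ must be bounded rather than dropped, and crude estimates near the maximiser $t_\ast$ do not close the gap. Since $\eta$ is nevertheless uniformly tiny on $[3,\infty)$ and $\psi_0$ is smooth and concave there, the problem reduces to a finite, if slightly fiddly, numerical verification, which matches the paper's remark that the proof is routine and may be omitted.
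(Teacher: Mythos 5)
The paper never gives a proof of this lemma --- it is explicitly ``omitted'' as uninteresting --- so there is no in-paper argument to compare with; judged on its own, your proposal is correct and essentially complete. The closed form $G(d)+1=\frac{e^d-1}{d}\exp\bigl(\frac{d}{e^d-1}-1\bigr)$ is right, and your monotonicity argument for \eqref{tech2} checks out: on $[3,\infty)$ one has $\phi'(t)\le \frac{e^3}{e^3-1}-\frac{13}{12}<1.053-1.083<0$ (the remaining term being negative), and $\phi(3)=\log(G(3)+1)-\tfrac98\approx-0.118<\log 0.9\approx-0.105$. For \eqref{tech1} you correctly identify the one delicate point: the naive global bound fails, since $\psi_0(t_\ast)+E(3)\approx 0.9931+0.0124=1.0055$ exceeds $2\log 1.65\approx 1.0016$, so some localisation is unavoidable; your interval-splitting scheme does close the gap. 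To make that step concrete: since $\psi_0$ is concave on $[3,\infty)$ (so on each subinterval it is bounded by its value at the endpoint nearer $t_\ast=2+\sqrt2$, or by $\psi_0(t_\ast)$ if the subinterval contains $t_\ast$) and $E$ is decreasing, the partition $[3,3.15]$, $[3.15,3.3]$, $[3.3,3.6]$, $[3.6,\infty)$ yields the bounds $0.9789+0.0125=0.9914$, $0.9905+0.0098=1.0002$, $0.9932+0.0077=1.0008$, and $0.9860+0.0047=0.9906$, all below $2\log 1.65\approx 1.00155$. Recording these four evaluations is the only piece missing from your write-up, and with them the proof is complete.
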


\begin{lemma}\label{diamprops}
Let $Z$ be a bounded complex random variable.  Then
$Z$ is almost surely confined to some closed disk of
radius $\frac{1}{\sqrt 3}\Diam Z$.
\end{lemma}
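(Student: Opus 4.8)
The plan is to recognise this as the planar case of Jung's theorem: every compact set in $\Reals^2$ of diameter $d$ lies in a closed disk of radius $d/\sqrt3$. So the proof splits into a measure-theoretic reduction and a purely geometric lemma.

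First I would set $d=\Diam Z$ (assuming $d<\infty$, else there is nothing to prove) and let $S\subseteq\Complexes$ be the topological support of $Z$, the smallest closed set with $P(Z\in S)=1$; boundedness of $Z$ makes $S$ compact. I would then check that $S$ has ordinary diameter at most $d$, i.e.\ $\abs{z_1-z_2}\le d$ for all $z_1,z_2\in S$. This follows from the definition~\eqref{diam1}: if some $z_1,z_2\in S$ had $\abs{z_1-z_2}>d$, one could find disjoint open balls $B_1\ni z_1$ and $B_2\ni z_2$, each of positive probability and with every point of $B_1$ at distance $>d$ from every point of $B_2$; then independent copies $Z_1,Z_2$ would satisfy $P(\abs{Z_1-Z_2}>d)\ge P(Z_1\in B_1)\,P(Z_2\in B_2)>0$, contradicting $\Diam Z\le d$.

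It then remains to prove the geometric lemma. By compactness there is a (unique) smallest closed disk $D$ containing $S$, of radius $R$ and centre $p$. The key observation is that $p$ must lie in the convex hull of $S\cap\partial D$, for otherwise a small translation of $p$ would shrink the disk while keeping $S$ inside; by the Carath\'eodory Principle in the plane, $p$ then lies in the convex hull of some two or three points of $S\cap\partial D$. If two points suffice, they are antipodal on $\partial D$, giving $2R\le d$; if three points $a,b,c$ are genuinely needed they form a nondegenerate triangle with circumcircle $\partial D$, hence circumradius $R$, and its largest angle is at least $\tfrac\pi3$, so the opposite side --- which joins two points of $S$ --- has length at least $2R\sin\tfrac\pi3=R\sqrt3\le d$. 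Either way $R\le d/\sqrt3$, so $S$, and therefore $Z$ almost surely, is confined to the closed disk $D$ of radius $\tfrac1{\sqrt3}\Diam Z$.

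The routine part is the enclosing-disk/Jung argument; the one place to be careful is the first step, namely passing from the probabilistic definition~\eqref{diam1} of $\Diam Z$ to the ordinary diameter of the support, and making sure the ``positive probability on disjoint balls'' claim is justified by the minimality of the support.
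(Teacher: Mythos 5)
Your proposal is correct, and at bottom it rests on the same fact the paper relies on: the planar case of Jung's theorem. The difference is one of presentation rather than substance. The paper dispatches the lemma in one line by citing a standard result on convex sets \cite[Thm.~12.3]{Lay} (adding only that the equilateral triangle shows the constant $\tfrac{1}{\sqrt 3}$ is best possible), whereas you make the argument self-contained: you first pass from the probabilistic diameter \eqref{diam1} to the ordinary diameter of the topological support --- a reduction the paper leaves implicit, and which your disjoint-balls argument handles correctly, since any two support points at distance $>d$ would give $P(\abs{Z_1-Z_2}>c)>0$ for some $c>d$ --- and you then prove the Jung bound via the smallest enclosing disk of the (compact) support. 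Both steps are sound; what your version buys is independence from the reference, at the cost of a longer proof of a textbook fact. One sentence in your three-point case needs tightening: from ``largest angle $\ge\pi/3$'' alone you cannot conclude that the opposite side has length $\ge 2R\sin\tfrac{\pi}{3}$, because $\sin$ is not monotone beyond $\pi/2$ (a very obtuse triangle has all chords short relative to its circumradius). The missing observation is already in your setup: when three points are genuinely needed, the centre $p$ lies in the convex hull of $\{a,b,c\}$, so the triangle contains its circumcentre and hence is non-obtuse; its largest angle then lies in $[\pi/3,\pi/2]$ and the law of sines gives the bound. Equivalently, the three central angles at $p$ sum to $2\pi$, so some pair of the three points subtends a central angle $\ge 2\pi/3$ and is therefore at distance $\ge R\sqrt3$. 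With that one line added, your argument is a complete substitute for the paper's citation.
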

\begin{proof}
This follows from a standard result on convex sets, see \cite[Thm.~12.3]{Lay}
for example.  An equilateral triangle shows that the constant cannot
be reduced.
\end{proof}

\begin{proof}[Proof of Theorem~\ref{main}]
  For $d\le 3$, the theorem follows from Theorem~\ref{smalld},
  so we can assume that $d\ge 3$.
    
  By Lemma~\ref{diamprops}, there is some $a\in\Complexes$ such
  that $\abs{Z-a}\le \tfrac{1}{\sqrt3}\,d$.
  We will find two bounds on $\abs{\E Ze^Z}$.
  First we argue that
  \begin{align}
     \abs{\E Ze^Z} &\le \abs{\E (Z-a)e^Z} + \abs{a\E e^Z} \notag\\
        &\le \dfrac{1}{\sqrt3}\,d\E\,\abs{e^Z} + \abs{a}\,\abs{\E e^Z} \notag\\
        &\le \dfrac{1}{\sqrt3}\, 0.9 \,d e^{d^2/8} + \abs{a}\,\abs{\E e^Z},
        \label{Ebnd1}
  \end{align}
  where we have used Lemma~\ref{Hoeff3} to bound
  $\E\abs{e^Z}=\E e^{\Re Z}$, and~\eqref{tech2}.
  On the other hand
  \begin{align}
     \abs{\E Ze^Z} &= \abs{\E Z(e^Z-\E e^Z)}
               \le \sqrt{\E \abs{Z}^2\,\E\abs{e^Z-\E e^Z}^2} \notag \\
       &\le \sqrt{\E\abs{Z-a}^2 - \abs{a}^2} \;
                \sqrt{\E\abs{e^{2Z}}-\abs{\E e^Z}^2} \notag \\
       &\le \sqrt{d^2/3 - \abs{a}^2} \;
           \sqrt{\vphantom{e^d}\smash{1.65^2 e^{d^2/4}-\abs{\E e^Z}^2}},
             \label{Ebnd2}
  \end{align}
  where we used Lemma~\ref{Hoeff3} to bound $\E e^{2\Re Z}$, 
  and~\eqref{tech1}.
  
  Taking the average of~\eqref{Ebnd1} and~\eqref{Ebnd2}, we have
  \begin{align}
     \abs{\E Ze^Z} &\le \dfrac{1}{\sqrt3}\,d e^{d^2/8}
        \( 0.45 + 0.825\alpha\beta + 0.825\sqrt{(1-\alpha^2)(1-\beta^2)}\,\) 
         \notag \\
      & \le \dfrac34 d e^{d^2/8}, \label{mbound}
  \end{align}
  where $\alpha,\beta$ are defined by 
  $\abs{a}=\tfrac{1}{\sqrt3}\,\alpha d$ and $\abs{\E e^Z}=1.65\,\beta e^{d^2/8}$,
  and we have used that $xy+\sqrt{(1-x^2)(1-y^2)}\le 1$ for $0\le x,y\le 1$.
  
  We can now complete the proof.  We have
  \[
     \E e^Z-1 = \E e^{3Z/d}-1
       + \int_{3/d}^1 \E Z e^{sZ}\,ds . \\
  \]
  Since $\Diam(3Z/d)\le 3$, we have by Theorem~\ref{smalld} that 
  $\abs{\E e^{3Z/d}-1}\le e^{3^2/8}-1$, and since we are assuming
  that $d\ge 3$, \eqref{mbound} gives that
  $\abs{\E Z e^Z}\le \frac14 d^2s\, e^{d^2s^2/8}$ for $s\ge 3/d$.
  Therefore,
  \[
    \abs{\E e^Z-1} \le e^{3^2/8}-1 
      + \int_{3/d}^1 \dfrac14 d^2s e^{d^2s^2/8}
    = e^{d^2/8}-1. \qedhere
  \]
\end{proof}

\begin{figure}[ht]
\centering
\unitlength=1cm
\begin{picture}(12,7.7)(0,0)
  \put(1,0){\includegraphics[scale=0.4]{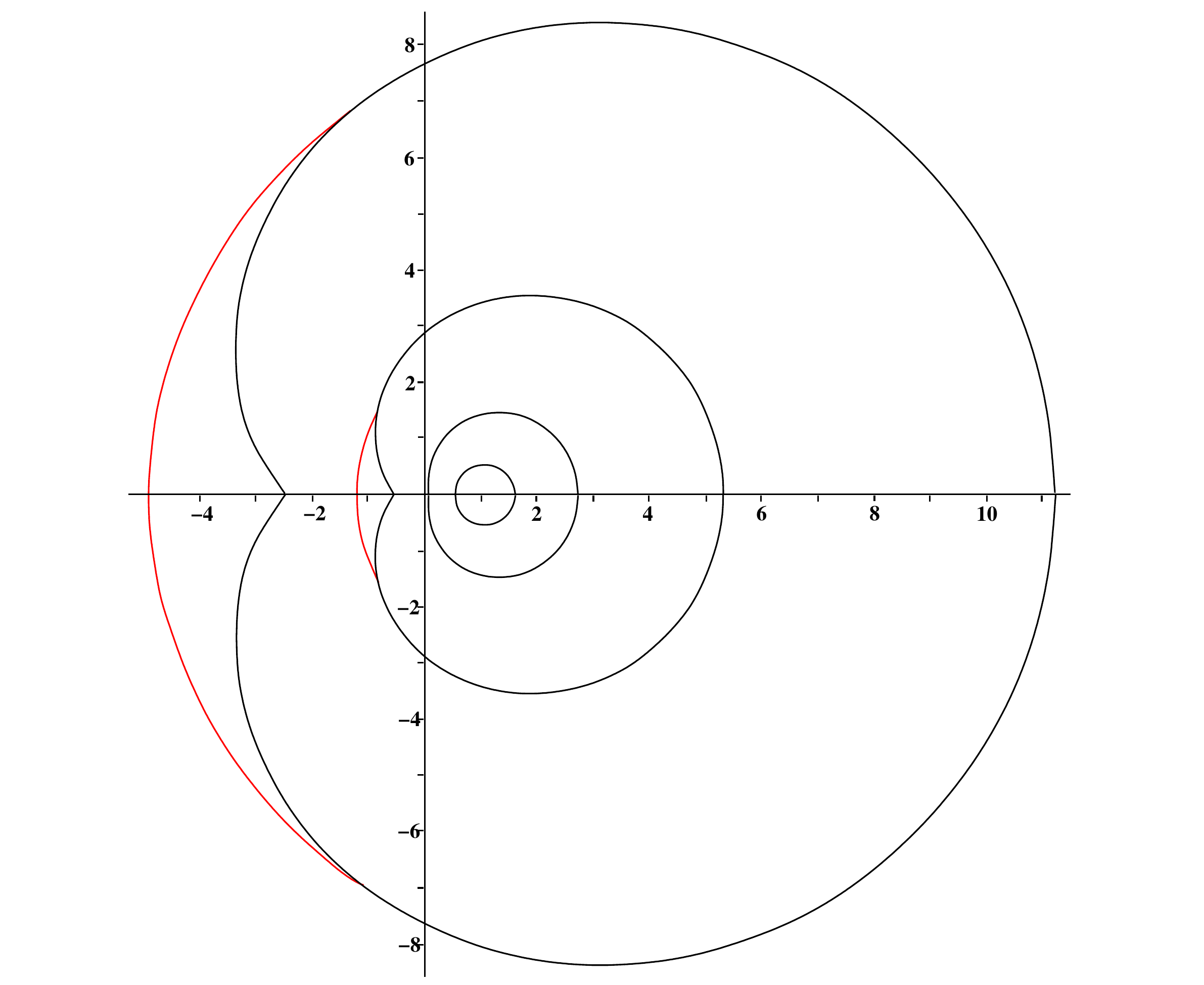}}
  \put(5.1,4.20){\small $d{=}2$}
  \put(5,4.75){\small $d{=}3$}
  \put(6,5.5){\small $d{=}4$}
  \put(8.2,7){\small $d{=}5$}
\end{picture}
\caption{Boundaries of $S_d^{(2)}$ (black) and
$S_d^{(3)}$ (red) for $d=2,3,4,5$}
\end{figure}

\subsection{Conclusions}

In conclusion, we note some questions that we have not
answered.  Our prediction
that Theorem~\ref{smalld} holds for all~$d$ is one of them.
We can also ask 
for an exact description of the boundary of possible values of 
$\E e^{Z-\E Z}$ when $\Diam Z\le d$. 
Define
$S_d = \{ \E e^Z \st Z\in \calZ_d \}$
and $S_d^{(k)} = \{ \E e^Z \st Z\in \calZ_d^{(k)} \}$.

Figure~1 shows $S_d^{(2)}$ and $S_d^{(3)}$ for $d=2,3,4,5$.
Note that manifestly $S_d^{(2)}$ and $S_d^{(3)}$ are not
always equal.  However it could be that the parts of them
in the right half-plane are equal.

By considering mixture with an identically-zero random variable,
we find that the region $S_d$ is star-like from the point~1, and therefore
simply connected.
Lemma~\ref{max} applied to functions of the form
$F(z)=\Re\, e^{z-i\theta}$ shows
that $S_d$ and $S_d^{(3)}$ have the same convex hull,
and therefore are the same if $S_d^{(3)}$ is convex.
However,  $S_d^{(3)}$ is not always convex; there is a
shallow indentation on the left side when $d=3$, for example.
There is also an indentation where the red and black curves
in the figure meet.

\nicebreak

\end{document}